	\def\@fnsymbol#1{\ensuremath{\ifcase#1\or \dagger\or \ddagger\or\mathsection\or \mathparagraph\or \|\or **\or \dagger\dagger\or \ddagger\ddagger \else\@ctrerr\fi}}
\title{Strong solutions to McKean--Vlasov SDEs with coefficients of Nemytskii-type: the time-dependent case}
\author{Sebastian Grube\thanks{
                  Faculty of Mathematics, Bielefeld University, 33615 Bielefeld, Germany. E-Mail: sgrube@math.uni-bielefeld.de}
        }
\newcommand{\law}[1]{{\mathcal L}_{#1}}
\newcommand{\LN}{\mathbb N}
\newcommand{\RR}{\mathbb R}
\newcommand{\BBBB}{\mathcal B}
\newcommand{\FF}{\mathcal F}
\newcommand{\PP}{\mathbb P}
\newcommand{\PPPP}{\mathcal P}
\newcommand{\SSS}{\mathscr S}
\newcommand{\inv}{^{-1}}
\newcommand{\rd}{{\RR^d}}
\DeclareMathOperator{\divv}{\mathrm{div}}
\DeclareMathOperator{\esssup}{ess~sup}
\newcommand{\norm}[2][]{\ifthenelse{\isempty{#1}}
	{\left\lVert#2\right\rVert}
	{\left\lVert#2\right\rVert_{{#1}}}
}
\newtheorem{theorem}{Theorem}[section]
\newtheorem{corollary}[theorem]{Corollary}
\newtheorem{remark}[theorem]{Remark}
\begin{document}
\newpage
\maketitle
\begin{abstract}
We consider a large class of nonlinear FPKEs with coefficients of Nemytskii-type depending \textit{explicitly} on time and space, for which it is known that there exists a sufficiently Sobolev-regular Schwartz-distributional solution $u\in L^1\cap L^\infty$. We show that there exists a unique strong solution to the associated McKean--Vlasov SDE with time marginal law densities $u$. In particular, every weak solution of this equation with time marginal law densities $u$ can be written as a functional of the driving Brownian motion.
 Moreover, plugging any Brownian motion into this very functional produces a weak solution with time marginal law densities $u$.
\end{abstract}

\textbf{Mathematics Subject Classification (2020):} 60H10,
60G17,
35C99.
\\
\sloppy \textbf{Keywords:} McKean--Vlasov stochastic differential equation, pathwise uniqueness, Yamada--Watanabe theorem, nonlinear Fokker--Planck--Kolmogorov equation

\section{Introduction}
In this paper we will consider the following McKean--Vlasov stochastic differential equation (abbreviated by McKean--Vlasov SDE or MVSDE) in $\rd$, $d\in\LN$,  with coefficients of Nemytskii-type, which in our case is of the form
\begin{align}\label{MVSDE}
	dX(t) \notag
	=&\ b\left(t,X(t),\frac{d\law{X(t)}}{dx}(X(t))\right)dt  + \sqrt{2a\left(t,X(t),\frac{d\law{X(t)}}{dx}(X(t))\right)}\mathbbm{1}_{d\times d}\ dW(t), \notag \\
	X(0)=&\ \xi,\tag{MVSDE}
\end{align}
where $t\in [0,T]$, $T\in (0,\infty)$, $\mathbbm{1}_{d\times d}$ is the $d$-dimensional unit matrix, $(W_t)_{t\in[0,T]}$ is a standard $d$-dimensional  $(\FF_t)$-Brownian motion and $\xi$ an $\FF_0$-measurable function on some stochastic basis  $(\Omega,\FF,\PP;(\FF_t)_{t\in[0,T]})$, i.e.  a complete, filtered probability space, where $(\FF_t)_{t\in[0,T]}$ is a normal filtration, and $\law{X(t)}:= \PP\circ (X(t))\inv, t\in [0,T]$.
Here, we assume that the drift and diffusion coefficient are given through Borel-measurable functions
\begin{align} \label{coefficients}
	&b : [0,T]\times\rd\times\RR  \to \rd,\\
	&a: [0,T]\times\rd\times\RR \to \RR.
\end{align}
Further, let us define $\beta(\cdot,r):= a(\cdot,r)r, b^*(\cdot,r):=b(\cdot,r)r, r\in\RR$. As in \cite{barbu2021nonlinear}, we impose the following conditions on the coefficients of \eqref{MVSDE}.
\begin{enumerate}[label=(H\arabic*) , wide=0.5em,  leftmargin=*]
	\item \label{condition.a.general} $a \in C^1([0,T]\times\rd\times\RR), a, \partial_r a$ are bounded and, for all $s,t \in [0,T], x\in \rd, r,\bar{r} \in \RR$,
	\begin{align}
		(\beta(t,x,r)-\beta(t,x,\bar{r}))(r-\bar{r}) &\geq \gamma_0 |r-\bar{r}|^2,\label{condition.beta.monotone}\\
		|\partial_r\beta(t,x,r)-\partial_r\beta(s,x,r)|&\leq h(x)|t-s|\partial_r\beta(t,x,r),\\
		|\beta(t,x,r)-\beta(s,x,r)| + |\nabla_x \beta(t,x,r)-\nabla_x\beta(s,x,r)| &\leq h(x)|t-s|(1+|r|), \\
		|\partial_t\beta(t,x,r)|+|\nabla_x \beta(t,x,r)| &\leq h(x)|r|.\label{beta.multiplicativeDerivativeX}
	\end{align}
	\item \label{condition.b.general} For each $(t,x) \in [0,T]\times\rd$, $b(t,x,\cdot) \in C^1(\RR)$, $b, (t,x,r)\mapsto r\partial_r b(t,x,r)$ are bounded and for all $s,t \in [0,T]$, $x\in \rd$, $r \in \RR$,
	\begin{align*}
		|b^*(t,x,r)-b^*(s,x,r)|	\leq h(x)|t-s|(1+|b^*(t,x,r)|),
	\end{align*}
	and
	\begin{align}\label{condition.bstar.2}
		|b^*(t,x,r)|\leq h(x)|r|,
	\end{align}
\end{enumerate}
where $\gamma_0 >0$, $h \in L^\infty(\rd) \cap L^2(\rd), h \geq 0$.
We note that the derivative $\partial_z$ denotes the derivative with respect to a scalar $z$-coordinate. Further, $\divv=\divv_x$, $\nabla = \nabla_x$, $\Delta = \Delta_x$ and $D=D_x$ denote the divergence, gradient, Laplacian and Jacobian with respect to the spacial $x$-coordinate. All the derivatives are supposed to be understood in the sense of Schwartz-distributions.
Here, we would like to point out that condition \eqref{condition.beta.monotone} and the continuity of $a$ in the $r$-variable imply
\begin{align}\label{condition.a.nondegenerate} 
	a\geq\gamma_0> 0,
\end{align}
which means that the diffusion matrix of \eqref{MVSDE} is assumed to be non-degenerate. 

The McKean--Vlasov SDE \eqref{MVSDE} arises from the study of the following type of nonlinear Fokker--Planck--Kolmogorov equation (FPKE)
\begin{align}
	\partial_t u(t,x) + \divv(b(t,x,u(t,x))u(t,x))-\Delta(a(t,x,u(t,x))u(t,x))&=0,\ \ (t,x)\in [0,T]\times\rd, \notag\\
	\left.u\right|_{t=0} &= u_0,\label{PME}\tag{FPKE}
\end{align}
which describes, for example, particle transport in disordered media (see, e.g. \cite{barbu2021nonlinear, barbu2018Prob} and the references therein).
In general, this equation is to be understood in the Schwartz-distributional sense. In this work, we will say that a family $u=(u_t)_{t\in [0,T]}=(u(t,\cdot))_{t\in [0,T]}$ of $L^1(\rd)$-functions is a Schwartz-distributional solution to \eqref{PME} if  
$t \mapsto u_tdx$ is narrowly continuous and 
\begin{align}\label{FPKE.test}
\int_{\RR^d} \varphi(x) u(t,x)dx =& \int_{\RR^d} \varphi(x) u_0(x)dx + \int_0^t \int_{\RR^d} b^i(s,x,u(s,x)) \partial_i \varphi(x) u(s,x)dxds \nonumber\\
 &+ \int_0^t\int_\rd a(s,x,u(s,x))\Delta \varphi(x) u(s,x) dxds,\ \ \forall t\in [0,T],
\end{align}
for each 
$\varphi \in C^{\infty}_c(\RR^d)$ (using Einstein summation convention), where $b=(b^i)_{i=1}^d$. If, additionally, $u_t \in \PPPP_0(\rd)$, i.e. $u_t$ is a probability density on $\rd$, $t \in [0,T]$, then $u$ is simply called a \textit{probability solution} to \eqref{PME}.

Very recently, \eqref{PME} has been investigated 
 in \cite{barbu2021nonlinear} under the conditions \ref{condition.a.general} and \ref{condition.b.general}.
In their work, the authors found that, under their conditions, \eqref{PME} even has a (unique) analytically strong solution $u$ in $H\inv$ with initial condition $u_0 \in D_0$, where $D_0:=\{f \in L^2 : \beta(0,\cdot,f) \in H^1\}$.
If, additionally, $u_0$ is a probability density, then the solution $u$ is a curve of probability densities and is therefore also a probability solution in the above sense. 
This enabled the authors to construct a weak solution to \eqref{MVSDE} via a superposition principle procedure for McKean--Vlasov SDEs presented in \cite[Section 2]{barbu2019nonlinear} (see also \cite[Section 2]{barbu2018Prob}) based upon Trevisan's superposition principle for SDEs (see \cite[Theorem 2.5]{trevisan_super}, generalising \cite{figalli2008}; see also \cite{bogachev2021super} for a recent improvement of both results).
\cite{barbu2021nonlinear} is subsequent to several papers of the same authors in the study of \eqref{PME} with time-homogenous coefficients
(cf. \cite{barbu2018Prob, barbu2019nonlinear, barbu2020solutions, barbu2021evolution} (and references therein)).
Let us note that in \ref{condition.a.general} and \ref{condition.b.general} a number of the assumptions are always fulfilled in the time-homogeneous setting. However, in the time-dependent case these assumptions are owed to the additional technical challenges, which come along with it.
 
We recall that \cite{barbu2021nonlinear} follows the approach developed in \cite{barbu2018Prob} and \cite{barbu2019nonlinear} by first finding a probability solution to \eqref{PME} and then associating a weak solution to \eqref{MVSDE} such that its time marginal law densities are given by such a probability solution. This constitutes a vital part in the realisation of McKean's original idea, proposed in \cite{mckean1966class}, to 
associate Markov processes to certain nonlinear PDEs, covering, in particular, the viscous Burgers' equation, in a way that the process's transition probabilities solve the PDE.

The aim of this paper is to use the above mentioned connection between probability solutions to \eqref{PME} and \textit{weak} solutions to \eqref{MVSDE} in the setup of \cite{barbu2021nonlinear} and show that under the conditions \ref{condition.a.general}, \ref{condition.b.general}, and additionally \ref{condition.b.pathwise} (see p.~\pageref{condition.b.pathwise}) there even exists a (probabilistically) strong solution to \eqref{MVSDE}, which is pathwise unique among all solutions with time marginal law densities $u$, where $u$ is the probability solution to \eqref{PME} with initial condition $u_0 \in \PPPP_0(\rd)\cap D_0$ provided by \cite{barbu2021nonlinear} (for the exact formulation see Corollary \ref{MVSDE.PME.strongExistence} below).  In particular, all weak solutions to \eqref{MVSDE}, whose time marginal law densities coincide with $u$ have the same law in $\PPPP(C([0,T];\rd))$.

In order to achieve this result, we will employ the procedure developed in \cite{grube2021strong}, which builds upon an application of a restricted Yamada--Watanabe theorem for SDEs to McKean--Vlasov SDEs (see \cite[Section 5]{grube2021strong}).

In our case, we consider McKean--Vlasov SDEs with so-called Nemytskii-type coefficients; we regard \eqref{MVSDE} as a McKean--Vlasov SDE of the form
\begin{align*}
	dX(t) \notag
	=&\ F(t,X(t),\law{X(t)})dt  + \sigma(t,X(t),\law{X(t)}) dW(t), \notag \\
	X(0)=&\ \xi,\tag{MVSDE}
\end{align*}
with coefficients
\begin{align*}
	[0,T]\times\rd\times\PPPP(\rd)\ni(t,x,\nu) &\mapsto F(t,x,\nu):= b(t,x,v_a(x)),\\
	[0,T]\times\rd\times\PPPP(\rd)\ni(t,x,\nu) &\mapsto \sigma(t,x,\nu):=\sqrt{2a(t,x,v_a(x))}\mathbbm{1}_{d\times d},
\end{align*}
where $\PPPP(\rd)$ is the set of all Borel probability measures on $\rd$, $v_a$ denotes the version of the density of the absolutely continuous part of $\nu$ both with respect to Lebesgue measure, which is obtained by setting $v_a= 0$ on the complement of its Lebesgue points.
$F$ and $\sigma$ are therefore Borel-measurable functions (cf., e.g. \cite[Remark 3.4]{grube2021strong}).
The dependence of $F$ and $\sigma$ on $\nu$ in terms of $v_a$ evaluated at a fixed point $x$ excludes the continuity of $F$ and $\sigma$ in their measure-component with respect to the topology of weak convergence of probability measures, Wasserstein distance or bounded variation norm.
These types of continuity assumptions are made in the major part of the literature (see, e.g. \cite{delarue2018mckean}).

Let us briefly recall the connection with our previous work.
In the case the coefficients of \eqref{MVSDE} are of the form $b(t,x,r) = E(x)\bar{b}(r)$, $\bar{b}\geq 0$, and $a(t,x,r)= a(r)$, a result analogous to Corollary \ref{MVSDE.PME.strongExistence} was achieved in \cite{grube2021strong} under weaker conditions.\\

This paper is structured as follows.
	First, we will fix some frequently used notation and, afterwards, recall the notation of a $P_{(v_t)}$-solution to and $P_{(v_t)}$-uniqueness for \eqref{MVSDE}, where $v$ is a probability solution to \eqref{PME}.
	Second, in Section~\ref{section.procedure}, we will provide the reader with a two-step procedure on how to obtain a unique strong solution to \eqref{MVSDE} with time marginal law densities $u$, where $u$ is the probability solution to \eqref{PME} with initial condition $u_0 \in \PPPP_0\cap L^\infty\cap D_0$  provided by \cite{barbu2021nonlinear}, based on the procedure developed in \cite{grube2021strong}.
	This procedure will then be carried out in the last section, Section \ref{section.mainresult}, which is divided into three subsections.
	Subsection \ref{section.mainresult.ingredient1} is devoted to gathering the results on the existence and the regularity of $u$ and the existence of a weak solution to \eqref{MVSDE} with time marginal law densities $u$.
	In Subsection \ref{section.mainresult.ingredient2}, we will give a pathwise uniqueness result for \eqref{MVSDE} among all weak solutions with time marginal law densities $u$.
	In Subsection \ref{section.mainresult.yamada}, we will apply the restricted version of the Yamada--Watanabe theorem for SDEs from \cite{grube2021strong} to \eqref{MVSDE}
	and combine the results of Subsections \ref{section.mainresult.ingredient1} and \ref{section.mainresult.ingredient2} in order to obtain a  strong solution to \eqref{MVSDE}, which is pathwise unique given the time marginal law densities $u$. 
\section*{Notation}
Within this paper we will use the following notation, which is essentially taken from \cite{grube2021strong}.
For a topological space $(\textbf{T},\tau)$, $\BBBB(\textbf{T})$ shall denote the Borel $\sigma$-algebra on $(\textbf{T},\tau)$.

Let $n\geq 1$. On $\RR^n$, we will always consider the usual $n$-dimensional Lebesgue measure $\lambda^n$ if not said any differently. If there is no risk for confusion, we will just say that some property for elements in $\RR^n$ holds \textit{almost everywhere} (or $\textit{a.e.}$) if and only if it holds $\lambda^n$-almost everywhere.
Furthermore, on $\RR^n$, $|\cdot|_{\RR^n}$ denotes the usual Hilbert--Schmidt norm. If there is no risk for confusion, we will just write $|\cdot|=|\cdot|_{\RR^n}$. By $B_R(x)$ we will denote the usual open ball with center $x\in \RR^n$ and radius $R>0$. 

Let $(S,\SSS,\eta)$ be a measure space. For $1\leq p \leq \infty$, $L^p(S;E)$ symbolises the usual Bochner space on $S$ with values in $E$. 
If $S=\RR^n$ and $E=\RR$, we just write $L^p(\RR^n;\RR)=L^p(\RR^n)$. The set of locally $p$-integrable functions on $\RR^n$ with values in $E$ will be denoted by $L^p_{loc}(\RR^n;E)$.
Moreover, $W^{1,p}(\RR^n)$ denotes the usual Sobolev-space, containing all $L^p(\RR^n)$-functions, whose first-order distributional derivatives can be represented by elements in $L^p(\RR^n)$. In the case $p=2$, we set $H^1(\RR^n):=W^{1,p}(\RR^n)$ and its continuous dual space shall be denoted by $H\inv(\RR^n)$. Accordingly, vector-valued first-order Sobolev functions on $\RR^n$ will be denoted by $W^{1,p}(\RR^n;E)$.

Let $(M,d)$ be a metric space. Then $\PPPP(M)$ denotes the set of all Borel probability measures on $(M,d)$. We will consider $\PPPP(M)$ as a topological space with respect to the topology of weak convergence of probability measures. 
A curve of probability measures $(\nu_t)_{t\in [0,T]}\subset \PPPP(M)$ is called narrowly continuous if $[0,T]\ni t\mapsto \int \varphi(x)\nu_t(dx)$ is continuous for all $\varphi \in C_b(M)$.
By $\PPPP_0(\RR^n)$ we will denote the set of all probability densities with respect to Lebesgue measure, i.e.
\begin{align*}
	\PPPP_0(\RR^n) = \left\{ \rho \in L^1(\RR^n)\ |\ \rho\geq 0 \text{ a.e.}, \int_{\RR^n} \rho(x) dx =1\right\}.
\end{align*}
Let $(\Omega,\FF,\PP)$ be a probability space and $(S,\SSS)$ a measurable space. If $X:\Omega \to S$ is an $\FF\slash\SSS$-measurable function, then we say that $\law{X}:=\PP\circ X\inv$ is the \textit{law} of $X$, whenever there is no risk for confusion about the underlying probability measure $\PP$.

By $C_c^\infty(\RR^n)$ we denote the set of all infinitely differentiable functions with compact support. 
Let $(E,\norm[E]{\cdot})$ be a Banach space.
The set of continuous functions on the interval $[0,T]$ with values in $E$ is denoted by $C([0,T];E)$ and is considered with respect to the usual supremum's norm.
 Further, we define $$C([0,T];E)_0 := \{w \in C([0,T];E): w(0)=0\}.$$
For $t\in [0,T]$, $\pi_t: C([0,T];E) \to E$ denotes the canonical evaluation map at time $t$, i.e. $\pi_t(w):=w(t), w\in C([0,T];E)$. Further, we set $\BBBB_t(C([0,T];E)):= \sigma(\pi_s : s\in [0,t])$ and, correspondingly, $\BBBB_t(C([0,T];E)_0):= \sigma(\pi_s: s\in [0,t])\cap C([0,T];E)_0$.
Moreover, $\PP_W$ denotes the Wiener measure on $(C([0,T];\rd)_0,\BBBB(C([0,T];\rd)_0))$.

\section*{$P_{(v_t)}$-solutions to \eqref{MVSDE}}
\label{notation.terminology.restricted} 
Let us briefly recall the solution concepts for \eqref{MVSDE} from \cite{grube2021strong} in order to make the steps in Section \ref{section.procedure} and the application of the restricted Yamada--Watanabe theorem in Section \ref{section.mainresult.yamada} conceptually more feasible.

If $v$ is a probability solution to \eqref{PME}, we set
\begin{align*}
	P_{(v_t)}:=\{Q \in \PPPP(C([0,T];\rd)): Q \circ \pi_t\inv = v_tdx, \forall t\in [0,T]\}.
\end{align*}

A \textbf{$P_{(v_t)}$-weak solution} $(X,W,(\Omega,\FF,\PP;(\FF_t)_{t\in[0,T]}))$ is a (probabilistically) weak solution $(X,W, (\Omega,\FF,\PP;(\FF_t)_{t\in[0,T]}))$ in the usual sense such that $\law{X(t)} = v_t$, for all $t\in [0,T]$. For the convenience of the reader, we will just write $(X,W)=(X,W, (\Omega,\FF,\PP;(\FF_t)_{t\in[0,T]}))$ in cases in which do not need to refer explicitly to the underlying stochastic basis $(\Omega,\FF,\PP;(\FF_t)_{t\in[0,T]})$.

We will say that \eqref{MVSDE} has a \textbf{$P_{(v_t)}$-strong solution} if there exists a function $F: \rd \times C([0,T];\rd)_0\to C([0,T];\rd)$, which is $\overline{\BBBB(\rd)\otimes \BBBB(C([0,T];\rd)_0)}^{v_0dx\otimes \PP_W}\slash \BBBB(C([0,T];\rd))$-measurable, such that, for $\mu_0$-a.e. $x\in \rd$, $F(x,\cdot)$ is $\overline{\BBBB_t(C([0,T];\rd)_0)}^{\PP_W}\slash \BBBB_t(C([0,T];\rd))$-measurable for all $t\in [0,T]$ and, whenever $\xi$ is an $\FF_0$-measurable function with $\law{\xi} = v_0dx$ and $W$ is a standard $d$-dimensional $(\FF_t)$-Brownian motion on some stochastic basis $(\Omega,\FF,\PP;(\FF_t)_{t\in [0,T]})$, $(F(\xi,W),W, (\Omega,\FF,\PP;(\FF_t)_{t\in[0,T]}))$ is a $P_{(v_t)}$-weak solution to \eqref{MVSDE}.
Here, $\overline{\BBBB(\rd)\otimes \BBBB(C([0,T];\rd)_0)}^{v_0dx\otimes \PP_W}$ denotes the completion of $\BBBB(\rd)\otimes \BBBB(C([0,T];\rd)_0)$ with respect to the measure $v_0dx\otimes \PP_W$, and $\overline{\BBBB_t(C([0,T];\rd)_0)}^{\PP_W}$ denotes the completion of $\BBBB_t(C([0,T];\rd)_0)$ with respect to $\PP_W$ on $(C([0,T];\rd)_0,\BBBB(C([0,T];\rd)_0))$.

Moreover, \textbf{$P_{(v_t)}$-pathwise uniqueness} holds for \eqref{MVSDE} if for every two $P_{(v_t)}$-weak solutions $(X,W, (\Omega,\FF,\PP;(\FF_t)_{t\in[0,T]}))$, $(Y,W, (\Omega,\FF,\PP;(\FF_t)_{t\in[0,T]}))$ (with respect to the same Brownian motion on the same stochastic basis) with $X(0)=Y(0)$ $\PP$-a.s., one has \\${\sup_{t\in [0,T]}|X(t) - Y(t)|}$ $\PP$-a.s.

We say that there exists a \textbf{unique $P_{(v_t)}$-strong solution} to \eqref{MVSDE} if there exists a $P_{(v_t)}$-strong solution to \eqref{MVSDE} with functional $F$ as above, and every $P_{(v_t)}$-weak solution $(X,W)$ is of the form $X = F(X(0),W)$ almost surely with respect to the underlying probability measure.
\section{The procedure}\label{section.procedure}
\label{chapter.PU.Procedure}
We will essentially follow the same procedure as proposed in \cite{grube2021strong}.

Our overall goal is to apply a restricted Yamada--Watanabe theorem for SDEs to \eqref{MVSDE} (see Theorem \ref{MVSDE.restrYamada} below), which will enable us to show that there exists a unique $P_{(u_t)}$-strong solution to \eqref{MVSDE} under the conditions \ref{condition.a.general}, \ref{condition.b.general} and \ref{condition.b.pathwise} (see below), where $u$ is the analytically strong solution in $H\inv$ to \eqref{PME} with initial condition $u_0\in \PPPP_0 \cap D_0\cap L^\infty$ provided by \cite{barbu2021nonlinear} (see Theorem \ref{PME.existence} below). In order to achieve this, we will need the following ingredients.
\begin{enumerate}
	\item \label{procedure.1} A $P_{(u_t)}$-weak solution to \eqref{MVSDE},
	\item \label{procedure.2} $P_{(u_t)}$-pathwise uniqueness holds for \eqref{MVSDE}. In order to show this, we will use that ${u \in L^2([0,T];H^1(\rd))\cap L^\infty([0,T]\times \rd)}$ (see Theorem \ref{PME.existence}).
\end{enumerate}
The ingredients will be gathered in Sections \ref{section.mainresult.ingredient1} and \ref{section.mainresult.ingredient2} and combined via the previously mentioned restricted Yamada--Watanabe theorem in Section \ref{section.mainresult.yamada}.
\section{The main result}\label{section.mainresult}
\subsection{Ingredient 1: A $P_{(u_t)}$-weak solution to \eqref{MVSDE}}\label{section.mainresult.ingredient1}
In \cite{barbu2021nonlinear}, Barbu and Röckner studied \eqref{PME} as an evolution equation in $H\inv$ in the analytically strong sense.
The following existence result for a Schwartz-distributional solution to \eqref{PME} is a special case of \cite[Theorem  2.1]{barbu2021nonlinear} and we will, thus, omit the proof.
\begin{theorem}\label{PME.existence}
	 Let $D_0:= \{v \in L^2(\rd): \beta(0,\cdot,v) \in H^1(\rd)\}$. Assume that \ref{condition.a.general} and \ref{condition.b.general} hold.	Then, for each $u_0 \in L^1(\rd)\cap D_0$ there exists a Schwartz-distributional solution $u=(u_t)_{t\in [0,T]}$ to \eqref{PME} such that
	\begin{align}\label{PME.existence.regularity}
		u &\in C([0,T];L^2(\rd))\cap W^{1,2}([0,T]; H\inv(\rd)) \cap L^\infty([0,T];L^1(\rd)),\\
		u, \beta(\cdot,u) &\in L^2([0,T];H^1(\rd)),\notag
	\end{align}
	and $\norm[L^1(\rd)]{u(t)} \leq \norm[L^1(\rd)]{u_0}$ for all $t\in [0,T]$.
	If $u_0 \in \PPPP_0(\rd) \cap D_0$, then $u$ is a probability solution to \eqref{PME}.
	
	Finally, assume that the condition \eqref{condition.bstar.2} is replaced by the stronger condition
	\begin{align}\label{PME.existence.regularity.strongerConditionB}
		|b^*(t,x,r)-b^*(t,x,\bar{r})|\leq h(x)|r-\bar{r}|,\ \ \forall t\in [0,T], x\in \rd,\ r,\bar{r} \in \RR,
	\end{align}
	where $h \in (L^2\cap L^\infty)(\rd), h\geq 0$ is the function introduced in \ref{condition.a.general} and \ref{condition.b.general}. 
	Then, for all $u_0, \bar{u}_0 \in L^1(\rd)\cap D_0$, the corresponding solutions $u(t,u_0), u(t,\bar{u}_0)$ to \eqref{PME} satisfy
	\begin{align}\label{PME.existence.L1contraction}
		\norm[L^1(\rd)]{u(t,u_0)- u(t,\bar{u}_0)} \leq \norm[L^1(\rd)]{u_0-\bar{u}_0},\ \ \forall t\in [0,T].
	\end{align}
	Moreover, if $D_x b\in L^1_{loc}([0,T]\times\rd\times\RR;\RR^{d\times d}),\Delta_x \beta \in L^1_{loc}([0,T]\times\rd\times\RR;\rd)$, 
	and
	\begin{align*}
		\Lambda(b,\beta):=\esssup\{|D_x b(t,x,r) r| + |\Delta_x\beta(t,x,r)| : t\in [0,T], x\in \rd, r\in\RR\}<\infty,	
	\end{align*}
	then, for all $u_0 \in L^1(\rd)\cap D_0\cap L^\infty(\rd)$,
$u \in L^\infty([0,T]\times\rd)$ with 
\begin{align*}
	\norm[{L^\infty([0,T]\times\rd)}]{u} \leq \Lambda(b,\beta)T + \norm[L^\infty(\rd)]{u_0}.
\end{align*}
	\end{theorem}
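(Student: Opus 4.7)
The plan is to recast \eqref{PME} as the nonlinear Cauchy problem $du/dt + A(t)u = 0$, $u(0)=u_0$, in the Gelfand triple $H^1(\rd) \subset L^2(\rd) \subset H\inv(\rd)$, where $A(t)u := -\Delta\beta(t,\cdot,u) + \divv b^*(t,\cdot,u)$. The monotonicity condition \eqref{condition.beta.monotone} makes $u \mapsto \beta(t,\cdot,u)$ strongly monotone on $L^2$, and since $-\Delta$ is an isometric isomorphism $H^1 \to H\inv$, the principal part $u \mapsto -\Delta\beta(t,\cdot,u)$ is maximal monotone in $H\inv$. The drift $u \mapsto \divv b^*(t,\cdot,u)$ is a Lipschitz-type perturbation in view of the boundedness of $b$ and of $r\partial_r b$ from \ref{condition.b.general}, so the sum defines a well-posed evolution problem for each frozen $t$.

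Concretely, I would approximate via a Galerkin scheme (or an implicit Euler scheme in time), solving the resolvent equations $u + \lambda A(t)u = f$ by the Browder--Minty theorem. Testing the approximating equation with $u$ in the $H\inv$-duality and using the $\gamma_0$-monotonicity of $\beta$ yields the energy estimate
\begin{align*}
\tfrac{1}{2}\norm[L^2(\rd)]{u(t)}^2 + \gamma_0\int_0^t \norm[H^1(\rd)]{u(s)}^2\,ds \leq \tfrac{1}{2}\norm[L^2(\rd)]{u_0}^2 + (\text{drift term}),
\end{align*}
where the drift term is absorbed via Young's inequality using \eqref{condition.bstar.2}. Together with the equation itself (which controls $du/dt$ in $H\inv$), these bounds deliver the stated regularity of $u$ and $\beta(\cdot,u)$, and Minty's monotonicity trick identifies the nonlinear limit $\beta(t,\cdot,u_n) \to \beta(t,\cdot,u)$ weakly in $L^2([0,T];H^1(\rd))$.

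To obtain $\norm[L^1(\rd)]{u(t)} \leq \norm[L^1(\rd)]{u_0}$ and the probability-density property, I would test against regularizations of $\mathrm{sgn}(u)$ and of $u^-$; since $\beta(\cdot,0)=b^*(\cdot,0)=0$, the resulting boundary contributions vanish, and Kato's inequality yields separate conservation of the $L^1$-norms of the positive and negative parts, so nonnegativity and total mass propagate from $u_0$. For the $L^1$-contraction under the stronger hypothesis \eqref{PME.existence.regularity.strongerConditionB}, the same sgn-test applied to $u-\bar u$ works: the diffusion term contributes nonnegatively because $\mathrm{sgn}(\beta(t,x,u)-\beta(t,x,\bar u))=\mathrm{sgn}(u-\bar u)$ by monotonicity of $\beta(t,x,\cdot)$, and the drift term is bounded by $\int h|u-\bar u|\,dx$, so Gronwall closes the estimate.

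For the $L^\infty$-bound, I would use the spatially constant supersolution $\overline u(t):=\norm[L^\infty(\rd)]{u_0}+\Lambda(b,\beta)\,t$; inserting it into \eqref{PME} leaves $\partial_t \overline u = \Lambda$ which, by the very definition of $\Lambda(b,\beta)$, pointwise dominates the sum of the spatial-derivative contributions coming from $\divv b^*(t,x,\overline u)$ and $\Delta \beta(t,x,\overline u)$. A comparison argument via $(u-\overline u)^+$, exploiting monotonicity of $\beta$ once more, forces $u \leq \overline u$, and the lower bound follows symmetrically. The principal obstacle throughout is the rigorous identification of the limit of the spatially inhomogeneous, time-dependent nonlinearity $\beta(t,x,u_n)$, where the standard autonomous monotone-operator machinery does not directly apply; this is precisely where the quantitative H\"older-in-time bounds of \ref{condition.a.general} become indispensable, controlling the discrepancy between $A(t)$ and $A(s)$ in the discretization or Galerkin procedure.
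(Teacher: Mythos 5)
The paper does not prove Theorem \ref{PME.existence} at all: it is quoted as a special case of Theorem 2.1 of \cite{barbu2021nonlinear}, and the proof is explicitly omitted. Your sketch is nevertheless broadly aligned with the Barbu--R\"ockner machinery used there: \eqref{PME} is treated as an abstract evolution equation $\tfrac{du}{dt}+A(t)u=0$ driven by a time-dependent quasi-$m$-accretive operator on $H\inv(\rd)$, existence is obtained by a time-discretization/approximation scheme with a priori estimates, and the nonlinear limit is identified by monotonicity; the quantitative Lipschitz-in-time conditions in \ref{condition.a.general} and \ref{condition.b.general} serve exactly the role you flag at the end, namely controlling the mismatch between $A(t)$ and $A(s)$ in the discretization.

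There is, however, a genuine gap in your derivation of the $L^1$-contraction \eqref{PME.existence.L1contraction}. You bound the drift contribution in the $\mathrm{sgn}$-test by $\int h\,|u-\bar u|\,dx$ and invoke Gronwall; that would only give $\norm[L^1(\rd)]{u(t,u_0)-u(t,\bar u_0)}\leq e^{Ct}\norm[L^1(\rd)]{u_0-\bar u_0}$, whereas the theorem asserts the exponent-free contraction. The correct observation is finer: after integrating by parts, the drift term $\int \mathrm{sgn}_\epsilon'(u-\bar u)\,\nabla(u-\bar u)\cdot\bigl(b^*(t,x,u)-b^*(t,x,\bar u)\bigr)\,dx$ tends to zero as $\epsilon\to 0$, because \eqref{PME.existence.regularity.strongerConditionB} gives $|b^*(t,x,u)-b^*(t,x,\bar u)|\leq h(x)\,\epsilon$ wherever $\mathrm{sgn}_\epsilon'\neq 0$, and the $\epsilon\inv$ size of $\mathrm{sgn}_\epsilon'$ is compensated by the vanishing measure of $\{0<|u-\bar u|<\epsilon\}$; thus the drift contributes nothing and no Gronwall factor arises. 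Two further inaccuracies worth fixing: $-\Delta$ is \emph{not} an isometric isomorphism from $H^1(\rd)$ onto $H\inv(\rd)$ on the whole space (one needs $1-\Delta$, or an equivalent $H\inv$ inner product, to set up the functional framework correctly); and your $L^2$-energy estimate omits the cross term $\int \nabla_x\beta(t,x,u)\cdot\nabla u\,dx$ produced by the explicit $x$-dependence of $\beta$, whose absorption is precisely what the bound \eqref{beta.multiplicativeDerivativeX} is designed for --- without it the estimate does not close.
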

The following theorem can be found in the same work \cite[Corollary  2.3]{barbu2021nonlinear}, which is based on the superposition principle procedure from \cite[Section 2]{barbu2018Prob}.
\begin{theorem}[$P_{(u_t)}$-weak solution]\label{MVSDE.existence}
	Assume \ref{condition.a.general} and \ref{condition.b.general} and  $u_0 \in \PPPP_0(\rd)\cap D_0$. Then, there exists a $P_{(u_t)}$-weak solution $(X,W)$ to \eqref{MVSDE}
	where $u$ is the probability solution to \eqref{PME} provided by Theorem \ref{PME.existence}.
\end{theorem}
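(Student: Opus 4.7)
The plan is to invoke the superposition principle for (linear) SDEs due to Trevisan, in the spirit of the construction outlined in \cite[Section 2]{barbu2018Prob}. Starting from the probability solution $u$ to \eqref{PME} provided by Theorem~\ref{PME.existence}, I would freeze the nonlinearity by defining the time-dependent (but now linear) coefficients
\begin{align*}
    \tilde{b}(t,x) := b(t,x,u(t,x)), \qquad \tilde{a}(t,x) := a(t,x,u(t,x)),
\end{align*}
which are Borel-measurable on $[0,T]\times\rd$. Under \ref{condition.a.general} and \ref{condition.b.general}, $a$ and $b$ are bounded, so $\tilde a$ and $\tilde b$ are bounded as well, and \eqref{condition.a.nondegenerate} yields $\tilde a \geq \gamma_0 > 0$. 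By construction, the curve of probability measures $\mu_t := u_t\,dx$ is narrowly continuous and solves the \emph{linear} Fokker--Planck equation
\begin{align*}
    \partial_t \mu_t + \divv(\tilde b(t,\cdot)\mu_t) - \Delta(\tilde a(t,\cdot)\mu_t) = 0
\end{align*}
in the distributional sense, which is exactly the rewriting of \eqref{FPKE.test} once the Nemytskii dependence is absorbed into the coefficients.

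Next I would verify the integrability assumption of Trevisan's superposition principle (as cited in the introduction): since $\tilde a,\tilde b$ are bounded and $u_t$ is a probability density,
\begin{align*}
    \int_0^T\!\!\int_\rd \bigl(|\tilde b(t,x)| + |\tilde a(t,x)|\bigr)\, u_t(x)\,dx\,dt \;<\;\infty.
\end{align*}
The superposition principle then produces a probability measure $\QQ$ on $C([0,T];\rd)$ whose one-dimensional marginals are $\QQ\circ\pi_t^{-1} = u_t\,dx$ and under which the canonical process $X$ solves the martingale problem associated with $\tilde b$ and the diffusion matrix $2\tilde a \mathbbm{1}_{d\times d}$. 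Because $\sqrt{2\tilde a}\mathbbm{1}_{d\times d}$ is uniformly non-degenerate, a standard martingale representation argument (possibly after enlarging the stochastic basis to accommodate an independent Brownian motion, though here this is not needed since the diffusion matrix is square and invertible) yields a $d$-dimensional $(\FF_t)$-Brownian motion $W$ such that
\begin{align*}
    X(t) = X(0) + \int_0^t \tilde b(s,X(s))\,ds + \int_0^t \sqrt{2\tilde a(s,X(s))}\,\mathbbm{1}_{d\times d}\,dW(s), \qquad t\in[0,T],
\end{align*}
on an appropriate stochastic basis $(\Omega,\FF,\PP;(\FF_t))$.

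Finally, I would translate this solution of the linearized SDE back into a $P_{(u_t)}$-weak solution to \eqref{MVSDE}. By construction $\law{X(t)} = u_t\,dx$, so $u(t,\cdot)$ is a version of the Lebesgue density $\tfrac{d\law{X(t)}}{dx}$. The one subtle point is that the Nemytskii coefficients of \eqref{MVSDE} prescribe the specific version $v_a$ of this density (vanishing off Lebesgue points), so I would argue that $v_a(X(t)) = u(t,X(t))$ for $\PP$-a.e.~$\omega$ and Lebesgue-a.e.~$t$: the set where the two versions disagree has Lebesgue measure zero and is therefore a $u_t\,dx = \law{X(t)}$-null set. Hence $\tilde b(t,X(t)) = b(t,X(t), v_a(X(t)))$ and similarly for $\tilde a$, almost surely, which turns the linear SDE above into \eqref{MVSDE}. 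The main obstacle is precisely this last identification: ensuring that the Nemytskii evaluation of the law density along the process coincides, in the sense required by \eqref{MVSDE}, with the fixed solution $u$; everything else is standard given the boundedness and non-degeneracy furnished by \ref{condition.a.general}--\ref{condition.b.general} and Theorem~\ref{PME.existence}.
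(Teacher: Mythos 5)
Your proposal is correct and takes essentially the same route the paper does: the paper cites \cite[Corollary 2.3]{barbu2021nonlinear}, which is exactly the superposition-principle procedure of \cite[Section 2]{barbu2018Prob} based on Trevisan's theorem, and your proof simply reconstructs that argument (freeze the Nemytskii nonlinearity along $u$, observe $u_t\,dx$ solves the resulting linear Fokker--Planck equation, apply the superposition principle using boundedness of $a,b$, invoke martingale representation with non-degeneracy from \eqref{condition.a.nondegenerate}, and identify $v_a(X(t))=u(t,X(t))$ a.s.\ since the law is $u_t\,dx$). One minor remark: the paper's convention already fixes $u_t$ to be the version vanishing off its Lebesgue points, so the final identification with $v_a$ is immediate by definition rather than requiring the a.e.\ argument you spell out, though your version is also fine.
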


\subsection{Ingredient 2: $P_{(u_t)}$-pathwise uniqueness for \eqref{MVSDE}}\label{section.mainresult.ingredient2}
Let $u$ be the probability solution to \eqref{PME} with initial condition $u_0 \in \PPPP_0(\rd)\cap D_0$ provided by Theorem \ref{PME.existence}.
Recall that, by definition, $P_{(u_t)}$-weak solutions to \eqref{MVSDE} all have the same time marginal laws. In particular, all these solutions fulfill the following SDE(!)
\begin{align}\label{MVSDE.fixed.u}\tag{$\text{SDE}_u$}
	dX(t)&=b^u(t,X(t))dt + \sqrt{2a^u(t,X(t))}\mathbbm{1}_{d\times d}dW(t),\ \ t\in [0,T],\\
	\law{X(0)} &= u_0(x)dx,\notag
\end{align}
where $b^u(t,x):=b(t,x,u_t(x))$ and $a^u(t,x):=a(t,x,u_t(x))$, $(t,x)\in [0,T]\times\rd$, and where, for each $t\in [0,T]$, we consider the $\lambda^d$-version of $u_t$ obtained by setting $u_t = 0$ on the complement of its Lebesgue points (cf. Introduction and \cite[Remark 2.8]{grube2021strong}).

Our aim is to show $P_{(u_t)}$-pathwise uniqueness for \eqref{MVSDE.fixed.u}, which is obviously equivalent to show $P_{(u_t)}$-pathwise uniqueness for \eqref{MVSDE}.
We will do so via a pathwise uniqueness result for SDEs extracted from the proof of \cite[Theorem 1.1]{roeckner2010weakuniqueness}. In order to be able to apply it, we need some additional assumptions. In particular, we need to guarantee that $u$ is bounded. These assumptions are formulated in the following.
\begin{enumerate}[label=(H\arabic*) , wide=0.5em,  leftmargin=*]\setcounter{enumi}{2}
\item \label{condition.b.pathwise} Assume that \eqref{PME.existence.regularity.strongerConditionB} holds, $\Lambda(b,\beta) < \infty$ 
 (see Theorem \ref{PME.existence} for its definition) and $b\in C^1([0,T]\times\rd\times\RR)$.
\end{enumerate}
The following theorem provides the main result of this subsection.
\begin{theorem}[$P_{(u_t)}$-pathwise uniqueness]\label{MVSDE.PU}
Assume that \ref{condition.a.general}, \ref{condition.b.general} and \ref{condition.b.pathwise} hold.
Let $u_0 \in \PPPP_0(\rd)\cap  D_0\cap L^\infty(\rd)$ and let $u$ denote the corresponding probability solution to \eqref{PME} provided by Theorem \ref{PME.existence}. 
Let $(X,W), (Y,W)$ be two $P_{(u_t)}$-weak solutions on a common stochastic basis $(\Omega,\FF,\PP;(\FF_t)_{t\in [0,T]})$ with respect to the same standard $d$-dimensional  $(\FF_t)$-Brownian motion $W$.
	
Then, $\sup_{t\in [0,T]} |X(t)-Y(t)| = 0\  \PP\text{-a.s.}$
\end{theorem}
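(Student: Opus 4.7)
My plan is to reduce the nonlinear pathwise uniqueness question to a uniqueness question for the \emph{linear} SDE \eqref{MVSDE.fixed.u}, and then to invoke a classical pathwise uniqueness result for such SDEs.

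\emph{Step 1: reduction.} Since both $X$ and $Y$ are $P_{(u_t)}$-weak solutions, their one-time marginal laws agree with $u_t\,dx$. With the convention fixed after \eqref{MVSDE.fixed.u} (the $\lambda^d$-version of $u_t$ obtained by setting $u_t=0$ off its Lebesgue points), the random coefficients of \eqref{MVSDE} collapse $\PP$-a.s.\ to the deterministic functions $b^u(t,x)=b(t,x,u_t(x))$ and $a^u(t,x)=a(t,x,u_t(x))$. Hence $X$ and $Y$ are two solutions of the linear SDE \eqref{MVSDE.fixed.u} on a common stochastic basis, driven by the same $W$ and with $X(0)=Y(0)$ $\PP$-a.s. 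The task reduces to pathwise uniqueness for \eqref{MVSDE.fixed.u}.

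\emph{Step 2: regularity of the linearised coefficients.} From \ref{condition.b.general}, $b^u$ is bounded. From \ref{condition.a.general}, $a^u$ is bounded and, by \eqref{condition.a.nondegenerate}, $a^u\geq \gamma_0>0$, so $\sigma^u:=\sqrt{2a^u}\,\mathbbm{1}_{d\times d}$ is bounded and uniformly non-degenerate, and $\sqrt{2\,\cdot}$ acts smoothly on its range. Since $\beta=ar$ and $|\nabla_x\beta(t,x,r)|\leq h(x)|r|$, one obtains $|\nabla_x a(t,x,r)|\leq h(x)$ (for $r\neq 0$, and by continuity everywhere), so the chain rule gives $|\nabla_x a^u(t,x)|\leq h(x)+\|\partial_r a\|_\infty\,|\nabla u_t(x)|$ in the distributional sense. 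Under \ref{condition.b.pathwise}, Theorem \ref{PME.existence} ensures $u\in L^\infty([0,T]\times\rd)\cap L^2([0,T];H^1(\rd))$, so $\nabla\sigma^u$ lies in the corresponding Sobolev/Lebesgue class (bounded in $x$-translation by $h\in L^\infty\cap L^2$, plus an $L^2([0,T]\times\rd)$ term).

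\emph{Step 3: applying the SDE pathwise uniqueness result.} With $b^u$ bounded and measurable, $\sigma^u$ bounded, uniformly non-degenerate and with distributional spatial derivative of the integrability established in Step~2, the pathwise uniqueness statement extracted from the proof of \cite[Theorem 1.1]{roeckner2010weakuniqueness} applies to \eqref{MVSDE.fixed.u} and yields $\sup_{t\in[0,T]}|X(t)-Y(t)|=0$ $\PP$-a.s.

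\emph{Main obstacle.} The delicate point is matching the spatial regularity of $\sigma^u$ against the integrability hypotheses of the SDE pathwise uniqueness result: only $\nabla u\in L^2([0,T]\times\rd)$ is available, whereas Krylov--R\"ockner-type criteria for singular drifts typically demand higher integrability. What saves us here is the combination of (i) boundedness of $b^u$, so no integrability budget of the drift has to be spent; (ii) boundedness of $u$, so that the chain-rule term $\partial_r a\cdot \nabla u$ inherits exactly the integrability of $\nabla u$; and (iii) the symmetric form $\sigma^u=\sqrt{2a^u}\,\mathbbm{1}_{d\times d}$, which fits the scalar non-degenerate framework of \cite{roeckner2010weakuniqueness}. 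Checking these hypotheses carefully is the bulk of the work; once done, the uniqueness conclusion is a direct citation.
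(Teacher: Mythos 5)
Your proposal follows essentially the same route as the paper: reduce to the linear SDE \eqref{MVSDE.fixed.u} by noting that all $P_{(u_t)}$-weak solutions share the marginal density $u_t$, then check that the frozen coefficients $b^u$ and $\sqrt{a^u}$ satisfy a Sobolev-regularity pathwise-uniqueness criterion for SDEs from \cite{roeckner2010weakuniqueness}. Two points deserve comment.

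First, a small gap: you verify the spatial regularity of $\nabla_x\sqrt{a^u}$ but never actually address $D_x b^u$. The criterion the paper uses (\cite[Theorem~2.4]{grube2021strong}, distilled from \cite{roeckner2010weakuniqueness}) needs \emph{both} $D_x b^u$ and $\nabla_x\sqrt{a^u}$ in $L^2([0,T];L^2_{loc})$. This is exactly where the hypothesis $b\in C^1([0,T]\times\rd\times\RR)$ in \ref{condition.b.pathwise} is used, via the chain rule
$D_x b^u = (D_x b)(t,x,u) + (\partial_r b)(t,x,u)\,\nabla_x^T u$,
and the boundedness of $u$ ensures the prefactors are locally bounded, so local $L^2$ control reduces to $\nabla u\in L^2([0,T]\times\rd)$. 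You should make this explicit, as it is the place where \ref{condition.b.pathwise} actually enters.

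Second, your ``main obstacle'' paragraph slightly mischaracterises the citation: the result extracted from \cite{roeckner2010weakuniqueness} that is applied here requires bounded coefficients, nondegenerate diffusion, and local $L^2$ Sobolev regularity of the spatial gradients, not an $L^p_t L^q_x$ Krylov--R\"ockner integrability budget for a singular drift. There is therefore no tension between the available regularity $\nabla u\in L^2([0,T]\times\rd)$ and the hypotheses being checked; the verification is a direct chain-rule computation. On the other hand, your observation that $|\nabla_x\beta(t,x,r)|\leq h(x)|r|$ yields the pointwise bound $|\nabla_x a(t,x,r)|\leq h(x)$ is correct and gives a slightly stronger (global in $x$) control than the local boundedness from $a\in C^1$ that the paper invokes, though only local control is actually needed.
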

\begin{proof}
As already mentioned before, $(X,W)$ and $(Y,W)$ both solve \eqref{MVSDE.fixed.u} and we will show $P_{(u_t)}$-pathwise uniqueness for solutions to \eqref{MVSDE.fixed.u}. Therefore, we will check the assumptions from \cite[Theorem 2.4]{grube2021strong} (extracted from the proof of \cite[Theorem 2.5]{roeckner2010weakuniqueness}). 
These assumptions are implied by the following conditions:
\begin{enumerate}[label=(\roman*)] 
	\item $b^u \in L^\infty([0,T]\times\rd;\rd), a^u \in L^\infty([0,T]\times\rd)$,
	\item $b^u$ and $\sqrt{a^u}$ are weakly differentiable in the $x$-coordinate such that
	\begin{align*}
		D_x b^u \in L^2([0,T];L^2_{loc}(\rd;\RR^{d\times d})), \nabla_x \sqrt{a^u} \in L^2([0,T];L^2_{loc}(\rd;\rd)).
	\end{align*}
\end{enumerate}
Clearly, (i) is satisfied. Let us now consider (ii).
Recall that Theorem \ref{PME.existence} guarantees that $u\in L^2([0,T];H^1(\rd))\cap L^\infty([0,T]\times\rd)$. Hence, by the usual chain-rule for Sobolev functions, for a.e. $t\in [0,T]$, $b^u(t,\cdot)$ has a weak gradient, with the representation
\begin{align*}
	D_x b^u(t,x) = (D_x b)(t,x,u(t,x))+(\partial_r b)(t,x,u(t,x))\nabla_x^T u(t,x)
\end{align*}
for a.e. $x \in \rd$.
Let $R>0$. Due to the local boundedness of the derivatives of $b$, we may find a constant $C>0$ such that
\begin{align*}
	\int_0^T\int_{B_R(0)} |D_{x}b^u(t,x)|^2 dxdt\leq TC|B_R(0)| + C \int_0^T\int_{B_R(0)} |\nabla_x u(t,x)|^2dxdt
	< \infty.
\end{align*}
Similarly, by \ref{condition.a.general},
 $a^u(t,\cdot)$ has a weak gradient with
\begin{align*}
	\nabla_x a^u(t,x)=(\nabla_x a)(t,x,u(t,x))+(\partial_{r}a)(t,x,u(t,x))\nabla^T_{x}u(t,x),
\end{align*}
for almost every $x\in \rd$.
It is easy to extend the restricted square-root function $\sqrt{\ \cdot\ }_{|[\gamma_0, \infty)}$ to a function $h_{\sqrt{\ }}\in C^1(\RR)$ with $h_{\sqrt{\ }}(x) = \sqrt{x}, x\in [\gamma_0,\infty)$.
Now, employing \eqref{condition.a.nondegenerate}, we may calculate with the help of the usual chain-rule for Sobolev-functions
\begin{align*}
	\nabla_x \sqrt{a^u(t,x)}
	&=\nabla_x h_{\sqrt{\ }}(a^u(t,x))
	=h_{\sqrt{\ }}'(a^u(t,x))\nabla_x(a^u(t,x)) \\
	&=\frac{(\nabla_x a)(t,x,u(t,x))+(\partial_{r}a)(t,x,u(t,x))\nabla_{x}u(t,x)}{2\sqrt{a(t,x,u(t,x))}},
\end{align*}
for almost every $x\in \RR^n$.
Since $a \in C^1([0,T]\times\rd\times\RR)$ and $ u\in L^2([0,T];H^1(\rd))\cap L^\infty([0,T]\times\rd)$, for each $R>0$, we can find a constant $C>0$ such that
\begin{align*}
	\int_0^T\int_{B_R(0)}|\nabla_{x}\sqrt{a^u(t,x)}|^2dxdt \leq \frac{C}{2\sqrt{\gamma_0}}\left(T|B_R(0)| + \int_0^T\int|\nabla_x u(t,x)|^2dxdt<\infty
\right)<\infty.
\end{align*}
This concludes the proof.
\end{proof}
\subsection{Application of the restricted Yamada--Watanabe theorem to \eqref{MVSDE}}
\label{section.mainresult.yamada}
Now let us combine the ingredients from the previous two subsections and apply the \textit{restricted} Yamada--Watanabe theorem for SDEs obtained in \cite{grube2021strong} to \eqref{MVSDE} in order to obtain the unique $P_{(u_t)}$-strong solution to \eqref{MVSDE}. For the terminology of $P_{(u_t)}$-solutions and uniqueness, please consult page \pageref{notation.terminology.restricted}. 

The following theorem is a special case of \cite[Theorem 3.3]{grube2021strong}.

\begin{theorem}
\label{MVSDE.restrYamada}
Let $v=(v_t)_{t\in [0,T]}$ be a probability solution to \eqref{PME}.
The following statements regarding \eqref{MVSDE} are equivalent. 
\begin{enumerate}
	\item There exists a $P_{(v_t)}$-weak solution and $P_{(v_t)}$-pathwise uniqueness holds.
	\item There exists a unique $P_{(v_t)}$-strong solution to \eqref{MVSDE}.
\end{enumerate}
\end{theorem}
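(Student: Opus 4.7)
The plan is to regard this as a restricted version of the classical Yamada--Watanabe theorem, where ``restricted'' means that we only consider the subclass $P_{(v_t)}$ of weak solutions with prescribed time marginals. The direction (2) $\Rightarrow$ (1) is immediate: a $P_{(v_t)}$-strong solution produces a $P_{(v_t)}$-weak solution when fed any Brownian setup, and for any two $P_{(v_t)}$-weak solutions $(X,W)$, $(Y,W)$ on a common basis with $X(0)=Y(0)$ $\PP$-a.s., uniqueness of the functional forces $X=F(X(0),W)=F(Y(0),W)=Y$ a.s., giving $P_{(v_t)}$-pathwise uniqueness.

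For the nontrivial direction (1) $\Rightarrow$ (2), I would fix one $P_{(v_t)}$-weak solution $(X_0,W_0)$ on some basis $(\Omega,\FF,\PP)$ and extract a strong functional by disintegration. First, form the joint law $Q$ of $(X_0(0),W_0,X_0)$ on $\rd\times C([0,T];\rd)_0\times C([0,T];\rd)$. Since $X_0(0)$ is $\FF_0$-measurable and $W_0$ is an $(\FF_t)$-Brownian motion, the marginal of $Q$ on the first two factors equals $v_0\,dx\otimes\PP_W$, so one can disintegrate $Q$ to obtain a regular conditional probability kernel $(x,w)\mapsto Q_{(x,w)}$ on $C([0,T];\rd)$, measurable with respect to the $v_0\,dx\otimes \PP_W$-completion.

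The decisive step is to show that $Q_{(x,w)}$ is a Dirac mass for $v_0\,dx\otimes \PP_W$-a.e.\ $(x,w)$, which will then let me define $F(x,w)$ as the unique point of its support. The standard device is to build, on a product space, two independent copies drawn from $Q_{(x,w)}\otimes Q_{(x,w)}$ coupled with the \emph{same} initial condition and the \emph{same} Brownian motion; by construction each coordinate is itself a $P_{(v_t)}$-weak solution of \eqref{MVSDE} (the marginals $v_t$ being read off from the original $Q$), so $P_{(v_t)}$-pathwise uniqueness forces them to coincide almost surely. A Fubini argument then upgrades this to the Dirac property for $(v_0\,dx\otimes\PP_W)$-a.e.\ $(x,w)$, yielding the measurable map $F$; the adaptedness of $F(x,\cdot)$ to the Wiener filtration follows from the $(\FF_t)$-adaptedness of $X_0$ together with the fact that the kernel $Q_{(x,w)}$ depends only on $w$ through its restriction to $[0,t]$ when evaluated on $\BBBB_t$-sets.

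The main obstacle I anticipate is the bookkeeping for the ``two-copies'' construction: one must verify carefully that, after disintegration, the two coupled coordinate processes on the enlarged space are genuine $P_{(v_t)}$-weak solutions with respect to a common filtration under which the shared $W$ is still a Brownian motion, and that the prescribed marginals $v_t$ are preserved by the coupling. Once $F$ is constructed, the final check is routine: for any stochastic basis, any $\FF_0$-measurable $\xi$ with $\law{\xi}=v_0\,dx$, and any $(\FF_t)$-Brownian motion $W$, the triple $(\xi,W,F(\xi,W))$ has joint law $Q$ by the Dirac property, so $(F(\xi,W),W)$ satisfies \eqref{MVSDE}, has time marginals $v_t$, and, by pathwise uniqueness, every $P_{(v_t)}$-weak solution must coincide with $F(X(0),W)$ almost surely, giving existence and uniqueness of the $P_{(v_t)}$-strong solution.
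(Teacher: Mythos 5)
The paper does not actually prove Theorem~\ref{MVSDE.restrYamada}; it cites it as a special case of an earlier result by the same author and omits the proof entirely. Your sketch reconstructs the classical Ikeda--Watanabe disintegration proof adapted to the restricted class $P_{(v_t)}$, which is precisely the strategy of the cited reference: disintegrate the joint law of $(X(0),W,X)$ against $v_0\,dx\otimes\PP_W$, run the two-copies coupling $Q_{(x,w)}\otimes Q_{(x,w)}$ on an enlarged filtered space, invoke $P_{(v_t)}$-pathwise uniqueness to get the Dirac property, and read off the measurable, adapted functional $F$. The obstacles you flag -- verifying that both coupled coordinates are genuine $P_{(v_t)}$-weak solutions under a common filtration with $W$ still Brownian, that the coupling preserves the prescribed marginals (automatic here since both coordinates inherit the third-coordinate marginal of $Q$, which is $v_t$), and the adaptedness of the kernel to the Wiener filtration -- are exactly the technical content of the restricted Yamada--Watanabe theorem, and your sketch places them correctly. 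So this is the same route as the paper's source; the only thing missing relative to a full proof is the detailed bookkeeping you already identify as the remaining work.
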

Gathering the results of Section 3.1 and 3.2, we have the corollary, which represents the main result of this paper.
\begin{corollary}[unique $P_{(u_t)}$-strong solution]\label{MVSDE.PME.strongExistence}
Assume that \ref{condition.a.general}, \ref{condition.b.general} and \ref{condition.b.pathwise} hold. Let $u_0 \in \PPPP_0(\rd)\cap D_0\cap L^\infty(\rd)$ and let $u$ denote the probability solution to \eqref{PME} provided by Theorem \ref{PME.existence}.
Then, there exists a unique $P_{(u_t)}$-strong solution to \eqref{MVSDE}.
\end{corollary}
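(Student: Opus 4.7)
The plan is to invoke the restricted Yamada--Watanabe theorem (Theorem \ref{MVSDE.restrYamada}) directly, applied with $v = u$, where $u$ is the probability solution to \eqref{PME} furnished by Theorem \ref{PME.existence}. That theorem asserts that existence of a unique $P_{(u_t)}$-strong solution to \eqref{MVSDE} is equivalent to the conjunction of (i) existence of a $P_{(u_t)}$-weak solution and (ii) $P_{(u_t)}$-pathwise uniqueness. Since both ingredients have been prepared in Subsections \ref{section.mainresult.ingredient1} and \ref{section.mainresult.ingredient2}, the proof reduces to matching hypotheses.

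For (i), I would appeal to Theorem \ref{MVSDE.existence}, whose assumptions are \ref{condition.a.general}, \ref{condition.b.general} together with $u_0 \in \PPPP_0(\rd)\cap D_0$. All of these are contained in the hypotheses of the corollary, so a $P_{(u_t)}$-weak solution exists. For (ii), I would invoke Theorem \ref{MVSDE.PU}, which additionally demands \ref{condition.b.pathwise} and $u_0 \in L^\infty(\rd)$. These too are among the standing assumptions, so $P_{(u_t)}$-pathwise uniqueness holds. Combining (i) and (ii) with Theorem \ref{MVSDE.restrYamada} yields the claim.

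There is no real obstacle at this final step: the substantive technical content has already been absorbed upstream. In particular, the use of $u \in L^\infty([0,T]\times\rd)$ (which follows from $\Lambda(b,\beta) < \infty$ built into \ref{condition.b.pathwise} via the last clause of Theorem \ref{PME.existence}) together with $u \in L^2([0,T];H^1(\rd))$ to verify the local Sobolev regularity of $b^u$ and $\sqrt{a^u}$ required for the classical SDE pathwise uniqueness result of \cite{roeckner2010weakuniqueness} is carried out inside the proof of Theorem \ref{MVSDE.PU}; the reduction from the McKean--Vlasov equation to the fixed SDE \eqref{MVSDE.fixed.u} and the subsequent passage from pathwise uniqueness to strong existence is packaged into Theorem \ref{MVSDE.restrYamada}. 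Consequently the proof of the corollary is essentially a bookkeeping step verifying that the hypotheses of the three preceding theorems are simultaneously met.
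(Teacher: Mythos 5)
Your proof is correct and coincides with the paper's argument: Corollary \ref{MVSDE.PME.strongExistence} is obtained by combining the $P_{(u_t)}$-weak existence from Theorem \ref{MVSDE.existence} and the $P_{(u_t)}$-pathwise uniqueness from Theorem \ref{MVSDE.PU} via the restricted Yamada--Watanabe theorem (Theorem \ref{MVSDE.restrYamada}). Your hypothesis bookkeeping matches the paper's.
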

\begin{remark}
In the situation of Corollary \ref{MVSDE.PME.strongExistence}, the condition on the initial datum can be relaxed to $u_0 \in \PPPP_0(\rd)\cap L^\infty(\rd)$. 
The argumentation sketches as follows.
Assuming the condition \eqref{PME.existence.regularity.strongerConditionB} and using the $L^1$-contraction property \eqref{PME.existence.L1contraction} of the solution $u$, Barbu and Röckner argued that, by an approximation argument, for $u_0 \in \PPPP_0(\rd)$ there exists a probability solution $u$ to \eqref{PME} (cf. \cite[Remark 2.2, Remark 3.3]{barbu2021nonlinear}), which, by the superposition principle, can be lifted to a $P_{(u_t)}$-weak solution to \eqref{MVSDE}.
Additionally assuming that $\Lambda(b,\beta)<\infty$, in fact, the same approximation argument also yields that $u\in L^\infty([0,T]\times\rd)$ and, using \cite[(3.26)]{barbu2021nonlinear}, $u \in L^2([0,T];H^1(\rd))$. Consequently, the argumentation in the proof of Theorem \ref{MVSDE.PU} works also if the assumption $u_0 \in D_0$ is dropped and, thus, $P_{(u_t)}$-pathwise uniqueness holds under this relaxed condition on $u_0$.
	 
In addition, the condition $b \in C^1([0,T]\times\rd\times\RR)$ in \ref{condition.b.pathwise} can be considerably relaxed to the following one: 
		Assume that for every compact set $K\subset \rd \times\RR$ there exists a nonnegative function $g \in L^2([0,T])$ such that for all $t \in [0,T]$ and all $z,\bar{z} \in K$,
\begin{align*}
			|b(t,z)-b(t,\bar{z})| \leq g(t) |z-\bar{z}|.
			\end{align*} 
Then, using a chain-rule for compositions of a Lipschitz function with a Sobolev function as, for example, in \cite[Corollary 3.2]{ambrosio1990chainrule}, Theorem \ref{MVSDE.PU} can be proved in a similar way. For the details, please consult \cite{grube2022thesis}. 
\end{remark}
\textbf{Acknowledgements:}
I am very grateful to Prof. Dr. Michael Röckner, who pointed out this interesting topic to me and with whom I shared various fruitful discussions.
Moreover, I gratefully acknowledge the support by the German Research Foundation (DFG) through the IRTG 2235.
\bibliographystyle{alpha}
\bibliography{Bibliography}
\end{document}